\newcolumntype{P}[1]{>{\centering\arraybackslash}p{#1}}
\newcolumntype{M}[1]{>{\centering\arraybackslash}m{#1}}
\numberwithin{equation}{section}
\newcommand{\E}{{\mathbb E}}
\def\1{\raisebox{2pt}{\rm{$\chi$}}}
\newtheorem{theorem}{Theorem}[section]
\newtheorem{corollary}[theorem]{Corollary}
\newtheorem{lemma}[theorem]{Lemma}
\newtheorem{proposition}[theorem]{Proposition}
\theoremstyle{definition}
\newtheorem{remark}[theorem]{Remark}
\newtheorem{example}[theorem]{Example}
 \def\1{\raisebox{2pt}{\rm{$\chi$}}}
\newcommand{\pare}[1]{\left( #1 \right)} 
\newcommand{\fd}[1]{\Delta #1 } \newcommand{\sfd}[1]{\Delta^2 #1 } \newcommand{\kfd}[2]{\Delta^{#1} #2}
\def\vint_#1{\mathchoice%
          {\mathop{\kern 0.2em\vrule width 0.6em height 0.69678ex depth -0.58065ex
                  \kern -0.8em \intop}\nolimits_{\kern -0.4em#1}}%
          {\mathop{\kern 0.1em\vrule width 0.5em height 0.69678ex depth -0.60387ex
                  \kern -0.6em \intop}\nolimits_{#1}}%
          {\mathop{\kern 0.1em\vrule width 0.5em height 0.69678ex
              depth -0.60387ex
                  \kern -0.6em \intop}\nolimits_{#1}}%
          {\mathop{\kern 0.1em\vrule width 0.5em height 0.69678ex depth -0.60387ex
                  \kern -0.6em \intop}\nolimits_{#1}}}
\def\vintslides_#1{\mathchoice%
          {\mathop{\kern 0.1em\vrule width 0.5em height 0.697ex depth -0.581ex
                  \kern -0.6em \intop}\nolimits_{\kern -0.4em#1}}%
          {\mathop{\kern 0.1em\vrule width 0.3em height 0.697ex depth -0.604ex
                  \kern -0.4em \intop}\nolimits_{#1}}%
          {\mathop{\kern 0.1em\vrule width 0.3em height 0.697ex depth -0.604ex
                  \kern -0.4em \intop}\nolimits_{#1}}%
          {\mathop{\kern 0.1em\vrule width 0.3em height 0.697ex depth -0.604ex
                  \kern -0.4em \intop}\nolimits_{#1}}}
\newcommand{\aveint}[2]{\mathchoice%
          {\mathop{\kern 0.2em\vrule width 0.6em height 0.69678ex depth -0.58065ex
                  \kern -0.8em \intop}\nolimits_{\kern -0.45em#1}^{#2}}%
          {\mathop{\kern 0.1em\vrule width 0.5em height 0.69678ex depth -0.60387ex
                  \kern -0.6em \intop}\nolimits_{#1}^{#2}}%
          {\mathop{\kern 0.1em\vrule width 0.5em height 0.69678ex depth -0.60387ex
                  \kern -0.6em \intop}\nolimits_{#1}^{#2}}%
          {\mathop{\kern 0.1em\vrule width 0.5em height 0.69678ex depth -0.60387ex
                  \kern -0.6em \intop}\nolimits_{#1}^{#2}}}
\newcommand\JPB{\textcolor{black}}
\def\ln{ \log }
\begin{document}
\title[Secretary Problem with quality-based payoff]{Secretary Problem with quality-based payoff}
\author[Blanc]{Pablo Blanc}
\address{Departamento de Matem\'atica, Facultad de Ciencias Exactas y Naturales, Universidad de Buenos Aires and IMAS, CONICET, Argentina}
\email[P. Blanc]{pblanc@dm.uba.ar}
\thanks{PB, JPB and DK were partially supported by a CONICET doctoral fellowship}

\author[Borthagaray]{Juan Pablo Borthagaray}
\email[J. P. Borthagaray]{jpbortha@dm.uba.ar}

\author[Kohen]{Daniel Kohen}
\email[D. Kohen]{dkohen@dm.uba.ar}

\author[Mereb]{Mart\'in Mereb}
\email[M. Mereb]{mmereb@gmail.com}

\keywords{Secretary problem, probability, order statistics}
\subjclass[2010]{60G40, 62L15}

\begin{abstract} 
We consider a variant of the classical Secretary Problem. 
In this setting, the candidates are ranked according to some exchangeable random variable and
the quest is to maximize the 
expected quality
of the chosen aspirant.
We find an upper bound for the optimal hiring rule, 
present examples showing it is sharp, and
recover the classical case, among other results.
\end{abstract}

\maketitle
\section*{Introduction}
A recruiter is faced with the task of selecting the best assistant 
among a stream of $ n $ applicants, on a reject-or-hire basis. Namely, the decision is made right after
the interview, there is no coming back once a candidate is rejected, and 
all the information gathered during the interview is whether the current postulant
is or not better than all its precursors.
The best strategy for the interviewer implies establishing 
a \emph{threshold index}  and selecting the first candidate that arrives after such a point and is the best the recruiter has interviewed so far.
In the classical Secretary Problem the question is to find the optimal threshold index,  provided that we want to
maximize the probability of hiring the best aspirant.

In this nice introductory example of a statistical decision making problem one learns that the best strategy
 is to blindly reject the first $ \approx n/e $ candidates and from that point on, 
to select the first postulant that is superior than all of the previous ones. 
If none is chosen with this plan, one just hires the last applicant. 
We consider $ n $, the total number of candidates, as a known quantity, that all of them are totally ranked 
with no ties, the recruiter is only allowed to determine 
if the current aspirant is the best that has arrived so far,
and that the order in which applicants arrive is random.

For a historical overview of this problem, some generalizations, and a conjecture about Kepler's choice of his second wife, see the interesting article by Ferguson ~\cite{Fer}. \JPB{To the best of the authors' knowledge, the first published solution  of the classical Secretary problem is due to Lindley \cite{Lin}; the problem of minimizing the expected rank of the candidate selected was studied by Chow et al. \cite{Chow}.}

\JPB{One of the first variants analyzed in the literature is the full-information problem, i.e., when the recruiter is able to gather information about the distribution and value of the candidates. This problem was solved by Gilbert and Mosteller \cite{GilMos}. Interestingly, the problem of minimizing the expected rank of the selected candidate with full information --known as the Robbins' problem-- remains open. Nonetheless, some bounds for this fully history-dependent problem are known; see \cite{Bruss}.}

Among other well-known variants of the classical Secretary Problem there is the Post-doc Problem ~\cite{vanderbei1995postdoc}, under the assumption that success 
is achieved when the selected applicant is the second best, 
(considering that the best one will go to Harvard anyway); the
Problem of admitting a Class of Students, instead of only one, from an aspirant pool,
in which the task is to find a subset of candidates all of them better ranked than all the rejected ones in an
on-line algorithm ~\cite{Van}; 
and the Problem of selecting the best $ k $ secretaries out of $ n $ with a similar method ~\cite{Gird}.
The Odds-theorem \cite{Bru} provides another framework in which the classical Secretary Problem may be solved and also allows to handle with group interviews \cite{matsui2016lower}, \cite{tamaki2010sum}.

In the present article we tackle a variant of the Secretary Problem in which the goal is to maximize the 
expected value of the 
quality
of the selected applicant. Bearden~\cite{Bea} proves that the
optimal
threshold index for independent and identically distributed (i.i.d.) \emph{uniform} 
random variables
is close to $ \sqrt{n} .$

As a by-product of a fruitful discussion after a talk about the work
of Bearden, we consider different distributions and prove that, 
in general, the optimal threshold index $ c_{*}(n) $ is essentially bounded from above by $ n/e $ 
(see Theorem~\ref{teo:bound} below for details). 
We provide several examples for both continuous and discrete distributions, 
and study the behavior of the optimal threshold index in each situation. 
Among these examples, we recover the classical Secretary Problem (Example \ref{ex:classic}) and show that the bound from Theorem ~\ref{teo:bound} is sharp (see examples \ref{ex:pareto}, \ref{ex:classic} and \ref{ex:bernoulli}). 
We also prove that the order statistics form a complete monotonic sequence.

The paper is organized as follows. In Section~\ref{s:descprob} we give the general setting of the problem, and define the optimal threshold index as well, in Section~\ref{s:mainth}
we state and prove the main result, namely the upper bound of the optimal stopping rule regardless of the distribution chosen (Theorem~\ref{teo:bound}).
Afterwards, in Section~\ref{s:othres}, we 
study the behavior of
$ c_{*}(n) $ as $ n\to \infty ,$ and finally in Section~\ref{s:examp} we provide several examples as 
Exponential, Normal, Pareto distributions, together with permutations and Bernoulli variables, and 
explain how to recover the classical problem from our framework.

This work was originated in the inconspicuous  ``$2038-$seminar'' which provided an excellent work atmosphere.
The authors want to thank IMAS-CONICET and DM-FCEyN-UBA for their support.
Special gratitude is due to their secretaries and the people who hired them as well, since they seemed 
to have been aware of these results beforehand.

\section{Description of the problem}\label{s:descprob} 
In this paper, we consider the following variant of the secretary problem. A recruiter wants to hire an assistant among $n$ candidates, under these conditions:
\begin{itemize}
	\item The qualifications of the applicants are given by exchangeable random variables $(X_{k})_{k=1,\dots,n}$ with finite expected value.
	\item The recruiter is \emph{a priori} aware of their joint distribution.
	\item The candidates start arriving to the interview, one by one, and the only information the recruiter is able to gather is whether the current applicant is the best one evaluated so far. In particular, the interviewer is not aware of the candidate's actual quality.
	\item Once the interview is finished, the recruiter has to choose whether rejecting or hiring the applicant. If an applicant is rejected, it is not possible to recall them later.
	\item The goal of the recruiter is to maximize the expected quality of the chosen candidate.
\end{itemize} 
\begin{remark}
It is necessary to define properly what happens if the current applicant is as good as 
the most qualified interviewed by then.
To this end, we rank the candidates by quality, break ties at random 
and regard a candidate as unsurpassed thus far if he or she is the best one according to this rank.
\end{remark}

Our first goal is to characterize the possible optimal strategies for the recruiter.
First, note that if a candidate is not the best qualified on arrival, their expected value given that information is lower than the expected value of a randomly chosen applicant.
Hence, any optimal strategy should only consider selecting a candidate if it is the best so far (except for the last one which we are forced to hire).

Let us define
\[
\mu_k:=\E[X_k|X_k=\max\{X_1,\dots,X_k\}]
\]
and
\begin{align*}
\mu:=\E[X_k].
\end{align*}
These values are well defined and are finite since the random variables are assumed to be identically distributed 
and to have finite expected value.
Moreover, since the variables are exchangeable, $\mu_k=\E[\max\{X_1,\dots,X_k\}]$ and thus $\mu_{k+1}\geq \mu_k$.

The following result shows that the possible optimal strategies for the recruiter follow the same pattern as in the classical Secretary Problem \cite[p.48]{Lin} (and most of its variants).

\begin{proposition}
The optimal strategy for the recruiter consists of choosing an adequate  $c \in \{1,\ldots, n\}$, to blindly reject the first $c-1$ candidates
and from that point on to hire a candidate if, when it comes to the interview, 
it is the best so far. 
\end{proposition}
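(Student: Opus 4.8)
The plan is to set the problem up as a finite-horizon optimal stopping problem and solve it by backward induction, exploiting two structural consequences of exchangeability: that the record events $R_k=\{X_k=\max(X_1,\dots,X_k)\}$ are independent with $\P(R_k)=1/k$, and that conditioned on $R_k$ the expected quality of candidate $k$ equals $\mu_k$ irrespective of which earlier candidates were records. Granting this, and using the earlier observation that an optimal rule never hires a non-record (save the forced last one), the only relevant state at stage $k$ is the position $k$ together with whether $R_k$ occurs. I would therefore let $V_k$ denote the optimal expected payoff obtainable from candidates $k,\dots,n$ when none of $1,\dots,k-1$ has been hired, evaluated just before $R_k$ is revealed, and record the dynamic programming recursion
\[
V_k \;=\; \frac{1}{k}\,\max\{\mu_k,\,V_{k+1}\} \;+\; \frac{k-1}{k}\,V_{k+1}, \qquad V_n = \mu,
\]
the boundary value reflecting that we are forced to hire candidate $n$, whose unconditional expectation is $\mu$. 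At a record in position $k<n$ it is then optimal to hire if and only if $\mu_k \ge V_{k+1}$.

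The heart of the argument is to show that the acceptance set $A := \{k : \mu_k \ge V_{k+1}\}$ is an up-set, i.e. of the form $\{c, c+1, \dots, n\}$; the threshold $c$ is then exactly the index in the statement. I would prove the implication $k \in A \Rightarrow k+1 \in A$ by contradiction. Suppose $\mu_k \ge V_{k+1}$ but $\mu_{k+1} < V_{k+2}$. Then $\max\{\mu_{k+1}, V_{k+2}\} = V_{k+2}$, and the recursion collapses to $V_{k+1} = V_{k+2}$. Combining this with $\mu_{k+1} \ge \mu_k$ (established before the statement) gives $\mu_k \ge V_{k+1} = V_{k+2} > \mu_{k+1} \ge \mu_k$, a contradiction. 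Hence $\mu_{k+1}\ge V_{k+2}$ and $k+1 \in A$. Since $n \in A$ (the last candidate is always hired), $A$ is nonempty, and setting $c := \min A \in \{1,\dots,n\}$ yields a rule that rejects every record before position $c$ and hires the first record at or after $c$ --- precisely blindly discarding the first $c-1$ applicants and then taking the first one that is best so far. Ties in the inequality defining $A$ may be resolved either way without affecting optimality.

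The main obstacle is not the monotonicity argument, which is elementary once $\mu_k$ is known to be non-decreasing, but the justification of the reduction to a recursion indexed by the position $k$ alone. This rests on verifying that the conditional law of the future record pattern $(R_{k+1},\dots,R_n)$ given the observed history $(R_1,\dots,R_k)$ depends only on $k$, and that the expected payoff of accepting a record at stage $k$ is $\mu_k$ independently of that history; both are classical consequences of exchangeability together with the random tie-breaking, the relative ranks being uniformly distributed so that records occur independently with probability $1/k$. I would isolate these two facts as the only place where exchangeability enters, after which the optimal-stopping computation and the threshold structure follow mechanically.
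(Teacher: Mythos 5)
Your proof is correct and follows essentially the same route as the paper: both compare the conditional record value $\mu_k$ with the optimal continuation value (your $V_{k+1}$ is exactly the paper's $\nu_k$), and both use the monotonicity of $(\mu_k)$ to conclude that the acceptance set is an up-set, yielding the threshold rule. The only difference is cosmetic: where the paper invokes that the continuation value $\nu_k$ is non-increasing in $k$, you reach the same conclusion via the collapse $V_{k+1}=V_{k+2}$ of the dynamic-programming recursion when continuing strictly beats accepting --- a self-contained variant of the same argument, at the same level of rigor regarding the exchangeability facts (independence of records, and $\E[X_k \mid \text{history},\, R_k]=\mu_k$) that the paper also uses implicitly.
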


\begin{proof}
Let $\nu_k$ be the expected value of the candidate optimally chosen given that we rejected the first $k$ candidates. 
We have that $\nu_{k+1}\leq \nu_k$ .
Recall that a candidate should only be considered if it is the best so far at arrival.
Given that the $k$-th candidate is the best so far, the recruiter 
should hire him only if $\mu_k\geq \nu_k$.
Observe that $\mu_{n-1}\geq \mu_1 = \nu_{n-1}$ and hence the set of values such that $\mu_k\geq \nu_k$ is not empty.

Since $\mu_k$ is increasing in $k$ and $\nu_k$ is decreasing, $\mu_k\geq \nu_k$ implies $\mu_{k+1}\geq \nu_{k+1}$.
Thus the recruiter should follow the strategy described in the statement for $c=min\{k:\mu_k\geq \nu_k\}$.
\end{proof}

Let $V_n(c)$ be the expected value of the hired candidate when following the strategy described in the proposition.
Our goal is to maximize this quantity among all possible values of $c$.
We define the \emph{optimal threshold index} $c_*(n)$, as the smallest value of $c$ that maximizes $V_n(c)$.
Hence, the optimal strategy for the recruiter consists of blindly rejecting the first $c_*(n)-1$ candidates
and from that point on to hire a candidate if, when it comes to the interview, it is the best so far. 

In order to find the optimal threshold index, we compute the expected value $V_n(c)$ of the hired candidate for each value of $c$ as follows. We set  $V_n(1):=\E[X_1]$, and given $c\geq 2$ we set
\begin{equation}
V_n(c)=\sum_{k=c}^{n-1}\mu_k\frac{c-1}{k-1}\frac{1}{k}+\frac{c-1}{n-1}\mu.
\label{eq:def_vn}
\end{equation}

Indeed, we are summing the expected value for the $k$-th applicant (provided it surpasses all the former ones) 
times the probability of being selected, and including at the end of this formula the case
in which the recruiter hires the last one.
Observe that the $k$-th candidate is selected if and only if the best one among the first $k-1$ is 
in the group of the first $c-1$ applicants and the $k$-th is superior to its precursors.



The discrete derivative of $V_n,$ defined by the forward difference, has the following expression
\begin{equation}\label{eq:derivada1}
\fd{V}_n(c) := V_n(c+1)-V_n(c)=\sum_{k=c+1}^{n-1}\frac{\mu_k}{k(k-1)}-\frac{\mu_c}{c}+\frac{\mu}{n-1}
\end{equation}
valid for $ c \geq 1. $
We aim to find an expression for $V_n(c+1)-V_n(c)$ suitable for algebraic manipulations.
In order to do so, let us recall the summation by parts formula
\[
\sum_{k=m}^n f_k(g_{k+1}-g_k) = \left[f_{n+1}g_{n+1} - f_m g_m\right] - \sum_{k=m}^n g_{k+1}(f_{k+1}- f_k).
\]
This enables us to rewrite \eqref{eq:derivada1} as
\begin{equation}\label{eq:derivada2}
\fd{V}_n(c)=\sum_{k=c}^{n-1}\frac{\mu_{k+1}-\mu_k}{k}-\frac{\mu_{n}-\mu_1}{n-1}.
\end{equation}
Therefore, the discrete second derivative of $V_n$ becomes
\begin{align*}
\sfd{V}_n(c) := V_n(c+2)-2V_n(c+1) +V_n(c) 
=-\frac{\mu_{c+1}-\mu_{c}}{c}
\le 0.
\end{align*}
The last inequality holds because $\mu_k \le \mu_{k+1}$ for every $k$.
This proves that $ V_n$ is concave in $ c $ and therefore, local maxima are automatically global maxima.
In conclusion, the desired $c$ is the first one that satisfies  $\fd{V}_n(c)\le 0$ and $\fd{V}_n(c-1) \ge 0$. 
We summarize the preceding discussion in the following proposition.

\begin{proposition}\label{d:optthre}
The \emph{optimal threshold index} for $n$ secretaries satisfies $$c_*(n) := \min \{ c \ge 1 \colon \fd{V}_n(c) \le 0 \}.$$
\end{proposition}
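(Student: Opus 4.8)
The plan is to read off the claim from the concavity of $V_n$ that has already been established. Recall that the second difference was computed to be $\sfd{V}_n(c) = -(\mu_{c+1}-\mu_{c})/c \le 0$, using $\mu_k \le \mu_{k+1}$. I would first translate this into monotonicity of the forward difference: since $\fd{V}_n(c+1) = \fd{V}_n(c) + \sfd{V}_n(c) \le \fd{V}_n(c)$, the sequence $c \mapsto \fd{V}_n(c)$ is non-increasing. Consequently, once $\fd{V}_n$ becomes non-positive it remains non-positive, which is exactly the unimodal picture: $V_n$ increases, peaks, and then decreases.

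Before invoking this, I would check that the set appearing in the statement is nonempty, so that its minimum is well defined. This is the one spot that needs an actual computation, and it is a boundary evaluation of \eqref{eq:derivada2}: at $c=n-1$ the sum collapses to a single term and
$\fd{V}_n(n-1) = (\mu_1-\mu_{n-1})/(n-1) \le 0$, because $\mu_{n-1}\ge \mu_1$. Hence $n-1$ always belongs to $\{c\ge 1\colon \fd{V}_n(c)\le 0\}$, so $c_0 := \min\{c\ge 1\colon \fd{V}_n(c)\le 0\}$ exists and satisfies $c_0\le n-1$.

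The final step is a telescoping argument. By minimality of $c_0$ we have $\fd{V}_n(c)>0$ for every $1\le c<c_0$, while monotonicity of $\fd{V}_n$ gives $\fd{V}_n(c)\le 0$ for every $c\ge c_0$. Writing $V_n(c_0)-V_n(j)=\sum_{c=j}^{c_0-1}\fd{V}_n(c)$ for $j<c_0$ shows this difference is a sum of strictly positive terms, so $V_n(j)<V_n(c_0)$; writing $V_n(j)-V_n(c_0)=\sum_{c=c_0}^{j-1}\fd{V}_n(c)$ for $j>c_0$ shows it is a sum of non-positive terms, so $V_n(j)\le V_n(c_0)$. Thus $c_0$ maximizes $V_n$ and is its \emph{smallest} maximizer, which by definition is $c_*(n)$.

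I do not anticipate a real obstacle, since the decisive ingredients — the explicit second-difference formula and the inequality $\mu_k\le\mu_{k+1}$ — are already available. The only care required is the boundary check that guarantees nonemptiness of the defining set, and keeping the strict inequality on the left of $c_0$ separate from the weak inequality on the right, so that $c_0$ is correctly identified as the smallest maximizer rather than merely one maximizer among possibly several.
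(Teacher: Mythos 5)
Your proposal is correct and follows essentially the same route as the paper, which derives the claim from the concavity of $V_n$ (the non-positivity of $\sfd{V}_n(c)=-(\mu_{c+1}-\mu_c)/c$) so that the first $c$ with $\fd{V}_n(c)\le 0$ is the smallest maximizer. Your additional boundary check $\fd{V}_n(n-1)=(\mu_1-\mu_{n-1})/(n-1)\le 0$, guaranteeing the set is nonempty, is a small but welcome extra piece of rigor that the paper leaves implicit.
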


\begin{remark}
It could be the case that there exists more than one $c$ maximizing $V_n(c)$.
Consequently, whenever we set an upper bound for the threshold index we are stating that there exists a $c$ in the set of maximizers of $V_n$ that is less or equal than the bound, and
whenever we provide a lower bound, it means that every maximizer is greater or equal than the bound.

Setting $c_*$ as the lowest possible is natural when considering the problem from the point of view of the recruiter,
who is seeking to maximize the quality of the selected candidate and not to perform too many interviews.
\end{remark}


\section{Main theorem}\label{s:mainth}
After having defined the optimal threshold index, the reader may ask himself about the dependence of $c_*(n)$
on the distribution of the random variables $(X_{k})_{k=1,\dots,n}$. 
It is well known that for the classical Secretary Problem the optimal threshold index is given by $n/e$. 
We claim that this number is an upper bound for $c_*(n)$, regardless of the distributions 
$(X_{k})_{k=1,\dots,n}$ of the candidates. More precisely, in this Section we prove the following theorem.

\begin{theorem}
\label{teo:bound}
Given any set of exchangeable random variables $(X_{k})_{k=1,\dots,n}$, the optimal threshold index $c_*(n)$ satisfies 
\begin{equation} \label{eq:cota_c}
\sum_{k=c_*(n)-1}^{n-1}\frac{1}{k} > 1.  
\end{equation}
In particular, the bound $c_*(n) \lesssim n/e$ holds.
\end{theorem}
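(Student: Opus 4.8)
The plan is to extract \eqref{eq:cota_c} from the single inequality $\fd{V}_n(c_*(n)-1) > 0$, which holds by the very definition of $c_*(n)$ as the first index at which $\fd{V}_n$ becomes nonpositive. (One checks from \eqref{eq:derivada2} that $\fd{V}_n(1) \ge 0$ always, with strict inequality unless all the $X_k$ coincide almost surely, so outside that trivial case $c_*(n) \ge 2$ and the index $c := c_*(n)-1$ satisfies $c \ge 1$.) Writing $d_k := \mu_{k+1}-\mu_k \ge 0$ and recalling $\mu_n-\mu_1 = \sum_{k=1}^{n-1} d_k$, formula \eqref{eq:derivada2} becomes
\[
\fd{V}_n(c) = \sum_{k=c}^{n-1}\frac{d_k}{k} - \frac{1}{n-1}\sum_{k=1}^{n-1} d_k .
\]
I will prove the contrapositive of \eqref{eq:cota_c}: if $\sum_{k=c}^{n-1} 1/k \le 1$, then $\fd{V}_n(c) \le 0$.

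The one structural fact I will need is that $(\mu_k)$ is concave, i.e.\ that $d_k$ is non-increasing in $k$. I expect this to be the crux of the argument, because monotonicity of $(\mu_k)$ by itself is not sufficient: one can produce nonnegative increment sequences for which the conclusion fails, and it is precisely exchangeability---through concavity---that excludes them. To obtain it I would write $d_k = \E[(X_{k+1}-M_k)^+]$ with $M_k := \max\{X_1,\dots,X_k\}$, use $M_{k+1} \ge M_k$ to bound $(X_{k+2}-M_{k+1})^+ \le (X_{k+2}-M_k)^+$, and then invoke exchangeability to identify $\E[(X_{k+2}-M_k)^+] = \E[(X_{k+1}-M_k)^+]$; together these give $d_{k+1} \le d_k$.

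Granting concavity, I would conclude by summation by parts. Put $\beta_m := d_m - d_{m+1} \ge 0$ with the convention $d_n := 0$, so that $d_k = \sum_{m \ge k}\beta_m$. Collecting the coefficient of each $\beta_m$ turns the displayed expression into $\fd{V}_n(c) = \sum_{m=1}^{n-1}\beta_m P_m$, where $P_m$ is the partial sum of the weights: $P_m = -m/(n-1)$ for $m<c$, and $P_m = \sum_{k=c}^{m} 1/k - m/(n-1)$ for $c \le m \le n-1$. Since all $\beta_m \ge 0$, it suffices to check $P_m \le 0$ for every $m$. This is immediate for $m<c$; and for $m \ge c$ the increments $P_m-P_{m-1} = 1/m - 1/(n-1) \ge 0$ show $P_m$ is non-decreasing on $\{c,\dots,n-1\}$, hence $P_m \le P_{n-1} = \sum_{k=c}^{n-1} 1/k - 1 \le 0$ under our standing assumption. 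Therefore $\fd{V}_n(c) \le 0$, which is the contrapositive we wanted, and \eqref{eq:cota_c} follows.

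For the asymptotic consequence I would feed \eqref{eq:cota_c} into the elementary bound $\sum_{k=c_*(n)-1}^{n-1} 1/k < \ln\frac{n-1}{c_*(n)-2}$, obtaining $\ln\frac{n-1}{c_*(n)-2} > 1$, i.e.\ $c_*(n)-2 < (n-1)/e$, which is the stated $c_*(n) \lesssim n/e$.
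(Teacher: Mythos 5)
Your proposal is correct, and its overall skeleton coincides with the paper's: both prove the contrapositive implication that $\sum_{k=c}^{n-1}\frac1k \le 1$ forces $\fd{V}_n(c)\le 0$, starting from \eqref{eq:derivada2}, telescoping $\mu_n-\mu_1=\sum_{k=1}^{n-1}(\mu_{k+1}-\mu_k)$, and using as the crucial input the concavity of $(\mu_k)$ --- and you are right that this, not mere monotonicity, is the crux. Where you genuinely depart from the paper is in how that concavity is established. The paper obtains it (Corollary~\ref{decreasing}) as the $j=2$ case of Proposition~\ref{prop:derivadadiscreta}, a combinatorial identity expressing $\kfd{j}{\mu_k}$ in terms of the order statistics $\mu_{k:k+j}$; that route is heavier but yields complete monotonicity of the sequence, which the paper advertises as a contribution in its own right. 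Your argument --- writing $\mu_{k+1}-\mu_k=\E[(X_{k+1}-M_k)^+]$ with $M_k:=\max\{X_1,\dots,X_k\}$, using $M_{k+1}\ge M_k$ pointwise, and then invoking exchangeability to swap the roles of $X_{k+2}$ and $X_{k+1}$ --- is shorter, purely probabilistic, makes transparent exactly where exchangeability enters, and delivers precisely the second-difference inequality needed (but nothing more). The concluding algebra also differs slightly, though equivalently: the paper bounds each increment $\mu_{k+1}-\mu_k$ by $\mu_{c+1}-\mu_c$ (from above where the weight $\frac1k-\frac1{n-1}$ is nonnegative, from below where the weight $\frac{-1}{n-1}$ is negative), arriving at $\fd{V}_n(c)\le(\mu_{c+1}-\mu_c)\left(\sum_{k=c}^{n-1}\frac1k-1\right)$, whereas you perform a second Abel summation against $\beta_m=d_m-d_{m+1}\ge 0$ and check that the partial sums $P_m$ of the weights are nonpositive; the two computations carry the same information. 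Finally, you handle the edge cases (the degenerate situation $c_*(n)=1$, and the elementary passage from \eqref{eq:cota_c} to $c_*(n)\lesssim n/e$) more explicitly than the paper does, which is harmless and, if anything, slightly more careful.
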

In order to prove the previous theorem, let us define 
\begin{align*}
\mu_{k:n}:=\E[X_k|X_1\leq\dots\leq X_n].
\end{align*}
Since the random variables under consideration are exchangeable, it is clear that $\mu_k=\mu_{k:k}.$
Given the sequence $(\mu_{k})_{k=1,\dots,n}$, consider the discrete derivatives 
\begin{align*}
\fd{\mu_k}=\kfd{1}{\mu_k}=\mu_{k+1}-\mu_k,
\end{align*}
and in general
\begin{align*}
\kfd{j}{\mu_k}=\kfd{j-1}{\mu_{k+1}}-\kfd{j-1}{\mu_k}.
\end{align*}
Straightforward computations lead to the identity
\begin{align} \label{eq:derivada_j}
\kfd{j}{\mu_k}=\sum_{i=0}^{j} \mu_{k+i} {j \choose {i}} (-1)^{j-i}=\sum_{i=k}^{k+j} \mu_{i} {j \choose {i-k}} (-1)^{j-(i-k)}, \text{ if } k + j \le n.
\end{align}

Now we prove the complete monotonicity of the order statistics.
\begin{proposition} \label{prop:derivadadiscreta}
Let $k,j \ge 0$ be such that $k+j\le n$, then the following formula holds:
\[\kfd{j}{\mu_k}={k+j \choose j}^{-1} (-1)^{j+1} \left( \mu_{k+1:k+j}-\mu_{k:k+j}\right) .\]
\end{proposition}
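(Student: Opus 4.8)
The plan is to argue by induction on $j$, using as the engine the classical ``deletion'' recurrence for expected order statistics, and then to reduce the inductive step to an elementary identity among binomial coefficients. First I would record the recurrence. Fix $m$ with $2\le m\le n$ and delete one of the $m$ variables uniformly at random; by exchangeability the deleted entry is equally likely to occupy any of the $m$ rank positions. Conditioning on the sorted sample, the $i$-th smallest of the remaining $m-1$ values coincides with the $(i+1)$-th smallest of the full sample when the deleted entry had rank $\le i$ (which happens for $i$ of the $m$ positions), and with the $i$-th smallest otherwise. Since any fixed $(m-1)$-subset is distributed like $X_1,\dots,X_{m-1}$, taking expectations gives
\begin{equation}\label{eq:osrec}
m\,\mu_{i:m-1}=(m-i)\,\mu_{i:m}+i\,\mu_{i+1:m},\qquad 1\le i\le m-1.
\end{equation}

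For the base case $j=1$ the claim reads $\fd{\mu_k}=\mu_{k+1:k+1}-\mu_{k:k}=\tfrac{1}{k+1}(\mu_{k+1:k+1}-\mu_{k:k+1})$, which follows at once from \eqref{eq:osrec} with $m=k+1$ and $i=k$; the degenerate case $j=0$ is just the defining identity $\mu_k=\mu_{k:k}$.

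For the inductive step I would write $\kfd{j}{\mu_k}=\kfd{j-1}{\mu_{k+1}}-\kfd{j-1}{\mu_k}$ and apply the induction hypothesis to each summand. The first term already involves order statistics of $k+j$ variables, whereas the second produces $\mu_{k:k+j-1}$ and $\mu_{k+1:k+j-1}$, attached to a sample of size $k+j-1$. I would lift these to sample size $k+j$ through \eqref{eq:osrec} with $m=k+j$ and $i=k,\,k+1$, so that after substitution every summand is expressed through the three order statistics $\mu_{k:k+j}$, $\mu_{k+1:k+j}$ and $\mu_{k+2:k+j}$. Note that the hypothesis carries the sign $(-1)^{j}$ for $\kfd{j-1}{\mu_{k+1}}$ and for $\kfd{j-1}{\mu_k}$, so the target sign $(-1)^{j+1}$ will emerge automatically once the coefficients are assembled.

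The crux, and the only place where genuine computation intervenes, is the binomial bookkeeping. I expect the coefficient of $\mu_{k+2:k+j}$ to vanish, which amounts to the identity $\binom{k+j}{j-1}^{-1}=\tfrac{k+1}{k+j}\binom{k+j-1}{j-1}^{-1}$, and I expect the coefficients of $\mu_{k:k+j}$ and $\mu_{k+1:k+j}$ to collapse to $\pm\binom{k+j}{j}^{-1}$, using $\binom{k+j}{j}^{-1}=j!\,k!/(k+j)!$ together with the elementary relation $(k+1)+(j-1-k)=j$. Verifying these three coefficient identities is the heart of the argument; once they are in place the expression reorganizes into $(-1)^{j+1}\binom{k+j}{j}^{-1}(\mu_{k+1:k+j}-\mu_{k:k+j})$, completing the induction. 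Everything beyond the coefficient matching is routine bookkeeping of signs.
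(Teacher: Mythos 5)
Your proof is correct, but it takes a genuinely different route from the paper's. The paper argues in one shot, without induction: it expands the $j$-th difference binomially as $\kfd{j}{\mu_k}=\sum_{i=0}^{j}\mu_{k+i}\binom{j}{i}(-1)^{j-i}$, expresses each $\mu_{k+i}$ through the order statistics of the full size-$(k+j)$ sample via the rank-counting identity
\begin{equation*}
\mu_i=\binom{k+j}{i}^{-1}\sum_{l=i}^{k+j}\mu_{l:k+j}\binom{l-1}{i-1},
\end{equation*}
and then collapses the resulting double sum using the alternating-sign identities $\sum_{i=0}^{t}(-1)^i\binom{t}{i}=\delta_t$ and $\sum_{i=0}^{t}i(-1)^i\binom{t}{i}=-\delta_{t-1}$. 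You instead induct on $j$, using the triangle rule $m\,\mu_{i:m-1}=(m-i)\,\mu_{i:m}+i\,\mu_{i+1:m}$ as the lifting device; your deletion argument for it is valid for exchangeable samples (and the identity itself is classical, found in the paper's reference on order statistics). Your inductive step does close as you predict: the three coefficient identities all hold, namely $\binom{k+j}{j-1}^{-1}=\frac{k+1}{k+j}\binom{k+j-1}{j-1}^{-1}$ kills the $\mu_{k+2:k+j}$ term, and $(k+1)+(j-1-k)=j$ merges the remaining coefficients into $\pm\binom{k+j}{j}^{-1}$, producing the sign $(-1)^{j+1}$. The trade-off: your argument is more modular, with each step elementary and the only external input a standard recurrence, while the paper's computation is self-contained and delivers the formula for all $j$ at once, at the cost of a heavier double-sum manipulation. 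One small point common to both: for $j=0$ the stated formula involves the undefined symbol $\mu_{k+1:k}$, so your decision to start the induction at $j=1$ (treating $j=0$ only as the tautology $\mu_k=\mu_{k:k}$) is the right one.
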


\begin{proof}
If we consider exchangeable random variables $(X_{l})_{l=1,\dots,k+l}$, then every relative order between them is equally likely.
By considering the possible ranks of the top-ranked variable among the first $i$ when taking into account the relative order of all the variables, we obtain
\begin{align*}
\mu_i={k+j \choose i}^{-1}\:\sum_{l=i}^{k+j}\mu_{l:k+j}{l-1 \choose i-1}.
\end{align*} 

Next, we replace this expression on the right hand side of 
\eqref{eq:derivada_j}.
After interchanging the order of summation and some algebraic manipulation, we obtain
\begin{align}\label{eq:derivada_jjj}
\kfd{j}{\mu_k}=
\frac{j!(-1)^j}{(k+j)!}\sum_{l=k}^{k+j}\mu_{l:k+j}(l-1)!\sum_{i=0}^{l-k}\frac{(k+i)(-1)^i}{(l-k-i)!i!} .
\end{align}
Let us recall that
\begin{align*}
\sum_{i=0}^{t}(-1)^i{t \choose i}=\delta_t \quad \text{and} \quad \sum_{i=0}^{t}i(-1)^i{t \choose i}= -\delta_{t-1}.
\end{align*}
Therefore, the last summation of~\eqref{eq:derivada_jjj} becomes
\begin{align*}
\begin{split}
\sum_{i=0}^{l-k}\frac{(k+i)(-1)^i}{(l-k-i)!i!}
&=\frac{1}{(l-k)!} \left( k\sum_{i=0}^{l-k}(-1)^i{l-k \choose i} +\sum_{i=0}^{l-k}i(-1)^i{l-k \choose i} \right)\\
&=\frac{1}{(l-k)!} \left( k\delta_{l-k} -\delta_{l-k-1} \right).
\end{split}
\end{align*}
Plugging this last expression into~\eqref{eq:derivada_jjj} we get
\begin{align*}
\begin{split}
\kfd{j}{\mu_k}
&=\frac{j!(-1)^j}{(k+j)!}\left(
\mu_{k:k+j}(k-1)!k-\mu_{k+1:k+j}k!
\right) ,
\end{split}
\end{align*}
from which the result follows.
\end{proof}

An immediate consequence of the previous proposition is the following.

\begin{corollary}\label{co:totomonot}
Let $j+k\le n$. Then, $\kfd{j}{\mu_k}\geq 0$ if $j$ is odd and $\kfd{j}{\mu_k}\leq 0$ if $j$ is even.
\end{corollary}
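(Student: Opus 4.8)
The plan is to read off the sign of $\kfd{j}{\mu_k}$ directly from the closed-form expression in Proposition~\ref{prop:derivadadiscreta}. That formula factors $\kfd{j}{\mu_k}$ as a product of three quantities: the strictly positive binomial factor $\binom{k+j}{j}^{-1}$, the alternating sign $(-1)^{j+1}$, and the difference $\mu_{k+1:k+j}-\mu_{k:k+j}$. Since the first factor is always positive, the sign of $\kfd{j}{\mu_k}$ (for $j\ge 1$, where the formula applies) is governed entirely by the product of the remaining two factors.

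The one substantive point to establish is that the difference $\mu_{k+1:k+j}-\mu_{k:k+j}$ is nonnegative, i.e.\ that the conditional order statistics are nondecreasing in their index. I would argue this as follows. By definition $\mu_{k:m}=\E[X_k\mid X_1\le\cdots\le X_m]$, so both expectations are taken with respect to the \emph{same} conditioning event $\{X_1\le\cdots\le X_{k+j}\}$. On that event one has the pointwise inequality $X_k\le X_{k+1}$, and taking conditional expectations over a common event preserves it, whence $\mu_{k:k+j}\le\mu_{k+1:k+j}$.

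With the difference thus shown to be $\ge 0$, the conclusion follows by inspecting the parity of $j$. When $j$ is odd, $(-1)^{j+1}=1$, so $\kfd{j}{\mu_k}$ is a product of nonnegative factors and hence $\kfd{j}{\mu_k}\ge 0$; when $j$ is even, $(-1)^{j+1}=-1$, so the same product is $\le 0$. These are exactly the two cases in the statement.

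There is essentially no obstacle here: the corollary is an immediate consequence of Proposition~\ref{prop:derivadadiscreta} once the monotonicity of the conditional order statistics is recorded, and that monotonicity is itself a one-line consequence of the ordering built into the conditioning event. The only point meriting care is making sure the two conditional expectations are compared over the same event, so that the pointwise bound $X_k\le X_{k+1}$ may be integrated directly.
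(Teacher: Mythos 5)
Your proof is correct and matches the paper's intent exactly: the paper presents Corollary~\ref{co:totomonot} as an immediate consequence of Proposition~\ref{prop:derivadadiscreta}, with the sign read off from the factor $(-1)^{j+1}$ and the (implicitly used) monotonicity $\mu_{k:k+j}\le\mu_{k+1:k+j}$ of the conditional order statistics. Your explicit justification of that monotonicity -- integrating the pointwise bound $X_k\le X_{k+1}$ over the common conditioning event -- is precisely the step the paper leaves unstated, so there is nothing to add.
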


Considering $j=1$ in Corollary \ref{co:totomonot}, we obtain the elementary result that the sequence $( \mu_k )_{k=1 \ldots n}$ is increasing. Moreover, setting $j=2$ we conclude:
\begin{corollary} \label{decreasing}
 The sequence $( \mu_{k+1}-\mu_{k} )_{k=1 \ldots n-1}$ is decreasing. 
\end{corollary}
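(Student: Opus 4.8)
The plan is to recognize that the assertion ``$(\mu_{k+1}-\mu_k)_{k=1,\dots,n-1}$ is decreasing'' is precisely the statement that the second discrete derivative of $(\mu_k)$ is nonpositive, and then to read this off directly from Corollary~\ref{co:totomonot}. First I would expand, straight from the definition,
\[
\kfd{2}{\mu_k}=\mu_{k+2}-2\mu_{k+1}+\mu_k=(\mu_{k+2}-\mu_{k+1})-(\mu_{k+1}-\mu_k),
\]
so that the sign of $\kfd{2}{\mu_k}$ encodes exactly the comparison between two consecutive first differences. Next I would invoke Corollary~\ref{co:totomonot} with $j=2$: since $2$ is even, we have $\kfd{2}{\mu_k}\le 0$ for every $k$ with $k+2\le n$. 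Rearranging this inequality gives $\mu_{k+2}-\mu_{k+1}\le\mu_{k+1}-\mu_k$ for all such $k$, which is precisely the claimed monotonicity.

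There is essentially no obstacle here, as the substantive work has already been done in Proposition~\ref{prop:derivadadiscreta} and its Corollary~\ref{co:totomonot}; the content of the present statement is just the reinterpretation of the sign of the second difference as concavity of the sequence $(\mu_k)$. The only point requiring a moment's care is the index range: the bound $\kfd{2}{\mu_k}\le 0$ is valid exactly when $k+2\le n$, i.e.\ for $k\in\{1,\dots,n-2\}$, and this is precisely the range over which consecutive differences $\mu_{k+1}-\mu_k$ (defined for $k=1,\dots,n-1$) can be compared, so the conclusion covers the full sequence as stated.
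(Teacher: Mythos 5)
Your proof is correct and is exactly the paper's argument: the paper derives Corollary~\ref{decreasing} by setting $j=2$ in Corollary~\ref{co:totomonot}, so that $\kfd{2}{\mu_k}=(\mu_{k+2}-\mu_{k+1})-(\mu_{k+1}-\mu_k)\le 0$, precisely as you wrote. Your attention to the index range $k+2\le n$ is also consistent with the hypotheses of Corollary~\ref{co:totomonot}, so there is nothing to add.
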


At this point we are ready to provide a proof of our main result.
\begin{proof} [Proof of Theorem \ref{teo:bound}]
Replacing $ \mu_{n}-\mu_1 $ by $$  \sum_{k=1}^{n-1}\mu_{k+1}-\mu_k  $$ in~\eqref{eq:derivada2} gives
\begin{align*}
\fd{V}_n(c) =\sum_{k=c}^{n-1}(\mu_{k+1}-\mu_k)\left(\frac{1}{k}-\frac{1}{n-1}\right)
+\sum_{k=1}^{c-1}(\mu_{k+1}-\mu_k)\left(\frac{-1}{n-1}\right),
\end{align*}
whose right hand side is not greater than 
\begin{align*}
\sum_{k=c}^{n-1}(\mu_{c+1}-\mu_c)\left(\frac{1}{k}-\frac{1}{n-1}\right)+\sum_{k=1}^{c-1}(\mu_{c+1}-\mu_c)\left(\frac{-1}{n-1}\right),
\end{align*}
from which we conclude
\begin{align*}
\fd{V}_n(c)\leq(\mu_{c+1}-\mu_c)\left(\sum_{k=c}^{n-1}\frac{1}{k} -1 \right).
\end{align*}

Then, $\fd{V}_n(c)\leq 0$ when
\begin{align*}\sum_{k=c}^{n-1}\frac{1}{k} \leq 1,
\end{align*}
which proves that $c_*(n)$ satisfies property \eqref{eq:cota_c}.
\end{proof}

\begin{remark} 
The upper bound given by \eqref{eq:cota_c} is sharp, see examples \ref{ex:pareto}, \ref{ex:classic} and \ref{ex:bernoulli} below. 
On the other hand, there are no non-trivial  lower bounds for the optimal threshold index valid in general for any i.i.d. random variables. 
The idea behind this fact is that if almost every candidate has maximal quality, then the recruiter has no need to wait. 
We refer to Example \ref{ex:bernoulli} and Remark \ref{r:extbeh} for a simple construction in which the optimal threshold index is $c_* = 2$ for any number of applicants. 
\end{remark}

\section{Asymptotic results for independent variables}\label{s:othres}
Throughout this section we make the further assumption that for each $n$ the random 
variables $(X_{k})_{1\leq k\leq n}$ 
are independent with a given distribution. 
We disregard the case of a Dirac delta distribution in which all the candidates are equally suitable, 
and thus every strategy furnishes the same result.

In this setting it is interesting to study the behavior of $c_*(n)$ as $n$ varies.
We prove that $c_*$ is a non-decreasing function of $n$ that diverges as $n$ goes to infinity.
This means that the recruiter has to wait longer as $n$ grows and that $c_*(n)$ becomes as 
large as wanted.
Namely, given any $m\geq 1,$ the interviewer would have to reject the first $m$ candidates if $n$ 
(the total number of applicants) is taken large enough. In the classical case, this is a straightforward
 consequence of the odds-theorem and the odds-algorithm from~\cite{Bru}. 
 Our approach is elementary in nature.

\begin{proposition} The optimal threshold index grows with the amount of candidates, that is,
 $c_*(n) \leq c_*(n+1)$.
\end{proposition}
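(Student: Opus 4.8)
The plan is to compare the discrete derivatives $\fd{V}_n$ and $\fd{V}_{n+1}$ directly, exploiting the standing assumption of this section: the variables are independent with a fixed distribution, so the quantities $\mu_k=\E[\max\{X_1,\dots,X_k\}]$ depend only on $k$ and on the common distribution, \emph{not} on the total number of candidates. Consequently one and the same sequence $(\mu_k)_k$ governs both the $n$-candidate and the $(n+1)$-candidate problems, and formula \eqref{eq:derivada2} applies verbatim to each. This observation is what makes a termwise comparison legitimate.

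First I would write, for $1\le c\le n-1$,
\begin{align*}
\fd{V}_n(c) &= \sum_{k=c}^{n-1}\frac{\mu_{k+1}-\mu_k}{k} - \frac{\mu_n-\mu_1}{n-1}, \\
\fd{V}_{n+1}(c) &= \sum_{k=c}^{n}\frac{\mu_{k+1}-\mu_k}{k} - \frac{\mu_{n+1}-\mu_1}{n},
\end{align*}
and subtract. The two sums differ only by the term $k=n$, namely $(\mu_{n+1}-\mu_n)/n$, which then combines with the two boundary fractions. A short computation shows that the resulting difference is independent of $c$:
\[
\fd{V}_{n+1}(c)-\fd{V}_n(c)=(\mu_n-\mu_1)\left(\frac{1}{n-1}-\frac{1}{n}\right)=\frac{\mu_n-\mu_1}{n(n-1)}\ge 0,
\]
where the sign follows from the monotonicity of $(\mu_k)_k$ recorded in Corollary~\ref{co:totomonot} with $j=1$.

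With this inequality in hand the conclusion is immediate. By definition of the optimal threshold index (Proposition~\ref{d:optthre}) we have $\fd{V}_{n+1}(c_*(n+1))\le 0$, whence $\fd{V}_n(c_*(n+1))\le \fd{V}_{n+1}(c_*(n+1))\le 0$. Thus $c_*(n+1)$ belongs to the set $\{c\ge 1\colon \fd{V}_n(c)\le 0\}$, whose minimum is precisely $c_*(n)$, and therefore $c_*(n)\le c_*(n+1)$. Equivalently: since the difference above is a constant nonnegative shift and each $\fd{V}_n$ is non-increasing in $c$ by concavity, passing from $n$ to $n+1$ raises the whole derivative curve and can only push its first sign change to the right.

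I expect the only delicate point to be the bookkeeping of the boundary terms upon subtraction — making sure the extra summand $(\mu_{n+1}-\mu_n)/n$ and the two distinct normalizing fractions $\frac{\mu_n-\mu_1}{n-1}$ and $\frac{\mu_{n+1}-\mu_1}{n}$ collapse to the clean $c$-independent constant displayed above — together with flagging at the outset that $\mu_k$ is genuinely $n$-independent, which is the conceptual hinge of the whole argument.
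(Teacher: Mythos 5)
Your proof is correct and is essentially the paper's own argument: the same application of \eqref{eq:derivada2} to both $n$ and $n+1$ yields the identical $c$-independent shift $\fd{V}_{n+1}(c)-\fd{V}_n(c)=\frac{\mu_n-\mu_1}{n(n-1)}\ge 0$, from which the monotonicity of $c_*$ follows. The only difference is that the paper phrases the final step contrapositively (if $c<c_*(n)$ then $\fd{V}_n(c)>0$, hence $\fd{V}_{n+1}(c)>0$, hence $c<c_*(n+1)$), which also sidesteps the marginal issue of evaluating $\fd{V}_n$ at $c_*(n+1)$ when the latter could equal $n$ (a degenerate case in which the claim holds trivially anyway, since $\fd{V}_n(n-1)\le 0$ forces $c_*(n)\le n-1$).
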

\begin{proof}
Given $c < c_*(n)$, using \eqref{eq:derivada2} we obtain
$$
0 < \fd{V}_n(c) = \sum_{k=c}^{n-1}\frac{\mu_{k+1}-\mu_k}{k}-\frac{\mu_{n}-\mu_1}{n-1} .
$$
Employing again \eqref{eq:derivada2},   now for $c$ and $n+1$ we obtain
\begin{align*}
\fd{V}_{n+1}(c)& = \fd{V}_n(c) + \frac{\mu_{n+1}-\mu_n}{n}  + \frac{\mu_{n}-\mu_1}{n-1}  - \frac{\mu_{n+1}-\mu_1}{n}  \\
& = \fd{V}_n(c)  + \frac{\mu_{n}-\mu_1}{n(n-1)} > 0 .
\end{align*}
Then, $\fd{V}_{n+1}(c) > 0$, so $c < c_*(n+1)$, implying that $c_*(n) \leq c_*(n+1)$ as claimed.
\end{proof}

\begin{lemma} \label{lemma:strict}
The sequence $(\mu_k)_{k=1}^\infty$ is strictly increasing.
\end{lemma}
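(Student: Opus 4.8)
The plan is to reduce the strict monotonicity to a statement about the positive part of $X_{k+1}-M_k$, where $M_k := \max\{X_1,\dots,X_k\}$. Recall that exchangeability gives $\mu_k=\E[M_k]$, so the non-strict inequality $\mu_{k+1}\ge\mu_k$ is already contained in Corollary~\ref{co:totomonot}; the task is to upgrade it to a strict one. Since $M_{k+1}=\max\{M_k,X_{k+1}\}$, one has the pointwise identity $M_{k+1}-M_k=(X_{k+1}-M_k)^+$, where $(\cdot)^+$ denotes the positive part. Taking expectations yields
$$\mu_{k+1}-\mu_k=\E\big[(X_{k+1}-M_k)^+\big].$$
As this integrand is non-negative, it suffices to show that it is strictly positive on a set of positive probability, which forces the expectation to be strictly positive.

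To produce such a set, I would exploit that the common distribution is not a Dirac delta (a case explicitly excluded in this section). Hence there exists a threshold $t\in\R$ with $0<\P(X_1\le t)<1$, so that both $\P(X_1\le t)$ and $\P(X_1>t)$ are positive. Using that the variables are independent and identically distributed, the event $\{M_k\le t\}\cap\{X_{k+1}>t\}$ factors and has probability
$$\P(X_1\le t)^{k}\,\P(X_1>t)>0.$$
On this event one has $X_{k+1}>t\ge M_k$, so $(X_{k+1}-M_k)^+=X_{k+1}-M_k>0$.

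Combining the two steps, $(X_{k+1}-M_k)^+$ is a non-negative random variable that is strictly positive on an event of positive probability; therefore its expectation is strictly positive and $\mu_{k+1}>\mu_k$. The only delicate point is this last implication: a non-negative variable that is positive on a set of positive measure has positive expectation, which follows by writing $\{(X_{k+1}-M_k)^+>0\}$ as the increasing union of the sets $\{(X_{k+1}-M_k)^+>1/m\}$ and bounding the expectation below on one of them. I expect no genuine obstacle here; the main thing to get right is the non-degeneracy argument guaranteeing the existence of the separating threshold $t$, together with the appeal to independence that makes the relevant event factor.
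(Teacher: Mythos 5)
Your proof is correct, and it takes a genuinely different route from the paper's. The paper works with distribution functions: since the variables are i.i.d.\ with CDF $F$, the maximum of $k$ of them has CDF $F^k$, so
\[
\mu_{k+1}-\mu_k=\int_{-\infty}^{\infty} x \, d\bigl(-F^{k}(x)(1-F(x))\bigr),
\]
which after an integration by parts (with boundary terms killed by a cited lemma of David--Nagaraja) becomes $\int F(x)^k\bigl(1-F(x)\bigr)dx$, visibly positive when $F$ is not degenerate. You instead argue pathwise: the identity $M_{k+1}-M_k=(X_{k+1}-M_k)^+$, the choice of a separating threshold $t$ with $0<\P(X_1\le t)<1$, and the factoring event $\{M_k\le t\}\cap\{X_{k+1}>t\}$ of probability $F(t)^k\bigl(1-F(t)\bigr)>0$ on which the increment is strictly positive. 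The two arguments are secretly the same computation --- Fubini applied to $\E\bigl[(X_{k+1}-M_k)^+\bigr]=\int \P(M_k\le x<X_{k+1})\,dx$ recovers the paper's integral --- but your version is more elementary: it avoids Stieltjes integration by parts and the boundary-term issue entirely (the paper must cite an external lemma to dispose of those terms), and it makes the role of independence and non-degeneracy explicit in a single event. What the paper's approach buys in exchange is the closed-form expression $\mu_{k+1}-\mu_k=\int F^k(1-F)\,dx$ for the gap, which can be useful for quantitative estimates, though it is not needed elsewhere in the paper. Both proofs use independence in an essential way (yours to factor the event, the paper's to write the CDF of $M_k$ as $F^k$), and both correctly invoke the standing assumption that the distribution is not a Dirac delta; your derivation of the threshold $t$ from non-degeneracy (via right-continuity of $F$, ruling out a CDF taking only the values $0$ and $1$) is sound.
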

\begin{proof}
If the Cumulative Distribution Function (CDF) of the variables is $F(x)$, then  the CDF of the maximum of $k$  is $F^k(x)$. Therefore,
\[\mu_{k+1}-\mu_k= \int_{-\infty}^{\infty} x \, d(-F^{k}(x)(1-F(x))) .\]

Integrating by parts the last expression we obtain 
\[\mu_{k+1}-\mu_k= \int_{-\infty}^{\infty} F(x)^k (1-F(x))dx  \]
with boundary terms vanishing thanks to the Lemma on page $37$ of \cite{DN}.

Since the variables are not Deltas, the last integral is positive and the result follows.
\end{proof}

\begin{remark}
The point of interest in the previous lemma is that not only is the sequence $(\mu_k)_{k=1}^\infty$ monotone, 
but it is \emph{strictly} increasing. To obtain this result, we require the random variables to be independent. For example, if we consider a set of $n$ applicants where $j$ are valued $0$ and $n-j$ are valued $1$, then $\mu_k = 1$ for every $k \ge j+1$. 
\end{remark}

\begin{proposition}\label{prop:c_diverge}
The optimal threshold index diverges with the number of candidates, that is,
$c_*(n) \to\infty$ when $n\to\infty$.
\end{proposition}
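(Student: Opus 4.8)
The plan is to exploit the characterization $c_*(n)=\min\{c\ge 1:\fd{V}_n(c)\le 0\}$ from Proposition~\ref{d:optthre} together with the concavity of $V_n$ in $c$ established above, which guarantees that $\fd{V}_n$ is non-increasing. Consequently $c_*(n)>m$ is equivalent to $\fd{V}_n(m)>0$, and it suffices to prove that for every fixed $m\ge 1$ one has $\fd{V}_n(m)>0$ for all $n$ large enough. The decisive ingredient, beyond the earlier monotonicity proposition, is the \emph{strict} inequality $\mu_{m+1}-\mu_m>0$ furnished by Lemma~\ref{lemma:strict}; this is exactly where independence (the non-Delta assumption) enters.

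First I would set $d_k:=\mu_{k+1}-\mu_k$, so that $d_k>0$ for every $k$ by Lemma~\ref{lemma:strict}, and $\mu_n-\mu_1=\sum_{k=1}^{n-1}d_k$. Starting from \eqref{eq:derivada2} and splitting the telescoped sum at the index $m$, I would rewrite
\begin{align*}
\fd{V}_n(m)=\sum_{k=m}^{n-1}d_k\left(\frac1k-\frac1{n-1}\right)-\frac{1}{n-1}\sum_{k=1}^{m-1}d_k.
\end{align*}
Then I would multiply through by $n-1>0$ and retain only the $k=m$ term of the (termwise non-negative) first sum, obtaining the lower bound
\begin{align*}
(n-1)\,\fd{V}_n(m)\ge d_m\,\frac{n-1-m}{m}-\sum_{k=1}^{m-1}d_k.
\end{align*}

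Here the subtracted quantity $\sum_{k=1}^{m-1}d_k$ is a constant independent of $n$, whereas the first term grows linearly in $n$ because $d_m>0$ is a fixed positive number. Hence the right-hand side tends to $+\infty$ as $n\to\infty$, so $\fd{V}_n(m)>0$ for all sufficiently large $n$, which yields $c_*(n)>m$. Since $m$ is arbitrary, this proves $c_*(n)\to\infty$.

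The main obstacle — and precisely the reason the conclusion can fail without independence — is securing the strict positivity $d_m>0$: if $(\mu_k)$ were merely weakly increasing, the dominant term $d_m(n-1-m)/m$ could vanish and the whole argument would collapse, consistent with the bounded-$c_*$ behavior noted in the remark preceding this section. Everything else is routine once Lemma~\ref{lemma:strict} is available; I would only double-check the index bookkeeping in the splitting of the telescoping sum and confirm that $\fd{V}_n$ is evaluated at the fixed index $m$ while $n$ is the variable driven to infinity.
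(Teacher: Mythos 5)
Your proof is correct and follows essentially the same route as the paper's: both fix $m$, use Lemma~\ref{lemma:strict} to secure the strict inequality $\mu_{m+1}-\mu_m>0$, and show $\fd{V}_n(m)>0$ for all large $n$ — indeed your lower bound simplifies to $\frac{\mu_{m+1}-\mu_m}{m}-\frac{\mu_{m+1}-\mu_1}{n-1}$, which is the paper's bound with $m+1$ replaced by $m$ in the first denominator. The only cosmetic difference is that you start from \eqref{eq:derivada2} and keep a single term of the telescoped sum, whereas the paper starts from \eqref{eq:derivada1}.
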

\begin{proof}
Let $c>0$ be fixed. From \eqref{eq:derivada1}, we have 
\[ \begin{split}
\fd{V}_n(c) & \ge \frac{\mu_c}{c(c+1)} + \mu_{c+1} \sum_{k=c+2}^{n-1} \frac{1}{k(k-1)} - \frac{\mu_c}{c} + \frac{\mu_1}{n-1} \\
& = \frac{\mu_{c+1} - \mu_c }{c+1} - \frac{\mu_{c+1} - \mu_1 }{n-1}.
\end{split} \]
Lemma \ref{lemma:strict} ensures that the first term above is strictly positive, while the last one tends to $0$ as $n$ tends to infinity, and so the derivative at $c$ is strictly positive for $n$ large enough.
\end{proof}

\section{Examples}\label{s:examp}
In this section we work through different families of distributions.
The problem of finding the optimal threshold index is invariant under linear scalings of the applicants' qualities.
For this reason the mean and variance of the random variables under consideration 
play no role in the estimates we provide.

In \S\ref{ss:iidcont} we deal with some continuous distributions (Exponential, Normal and Pareto), 
and manage to prove that the upper bound from Theorem~\ref{teo:bound} is asymptotically optimal in a precise sense.
In \S\ref{ss:disc} we work out some discrete examples, recover the solution of the classical Secretary Problem and exhibit an example that shows that
there is no non-trivial lower estimate for $c_*(n)$.

Plots of several of the examples considered, both for continuous and discrete distributions, are displayed in Figure \ref{fig:graficos}. These illustrate the different behaviors $c_*(n)$ may exhibit.

\begin{figure} 
\centering
\begin{tabular}{M{5.8cm}M{5.8cm}}
 \includegraphics[width=55mm]{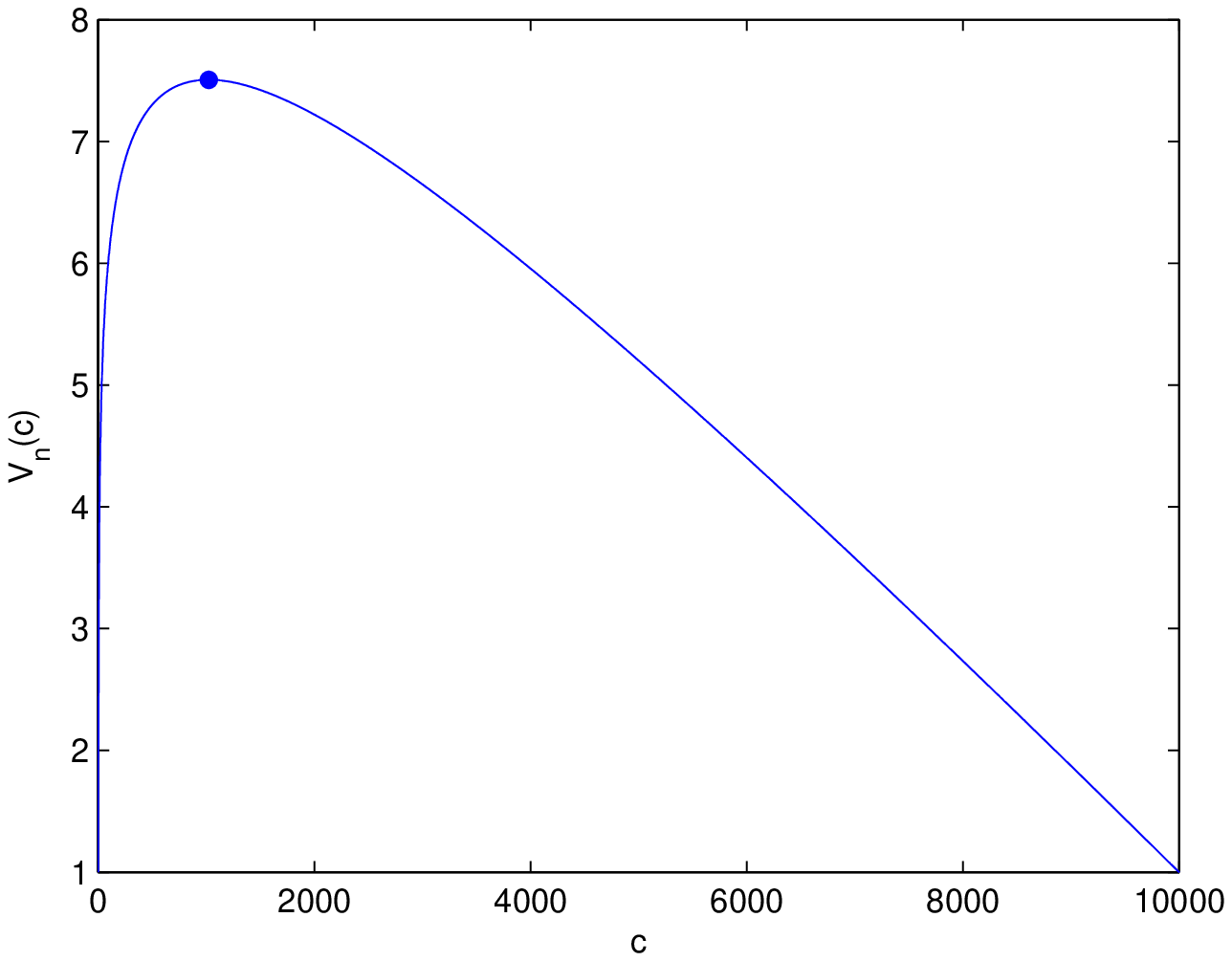} &   \includegraphics[width=55mm]{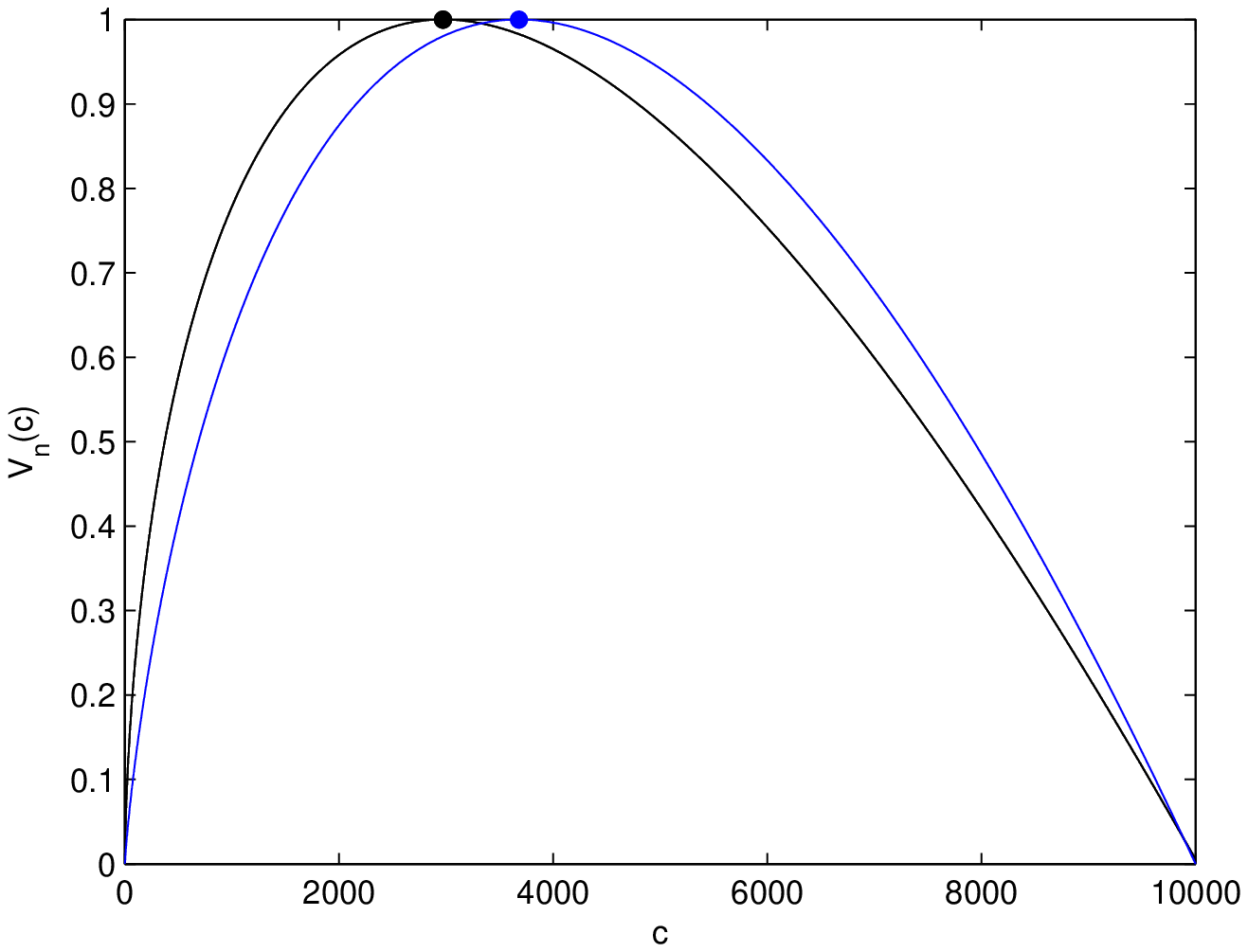} \\
{\footnotesize \centering (a) Exponential distribution.} & 
{\footnotesize \centering (b) Pareto distribution.}\\
\multicolumn{2}{c}{\includegraphics[width=55mm]{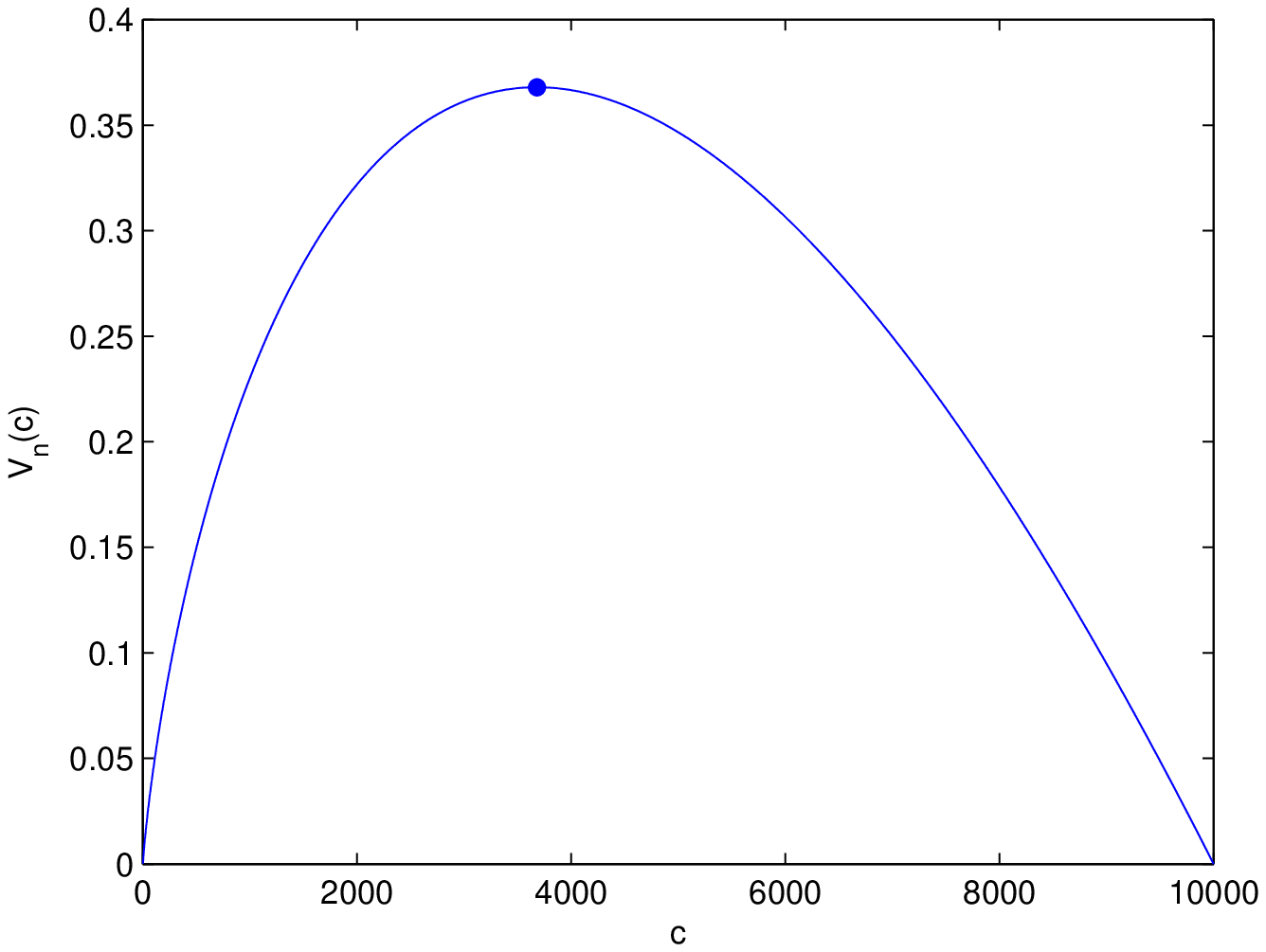} }\\
\multicolumn{2}{c}{
{\footnotesize \centering (c) Classical problem.}
} \\
  \includegraphics[width=55mm]{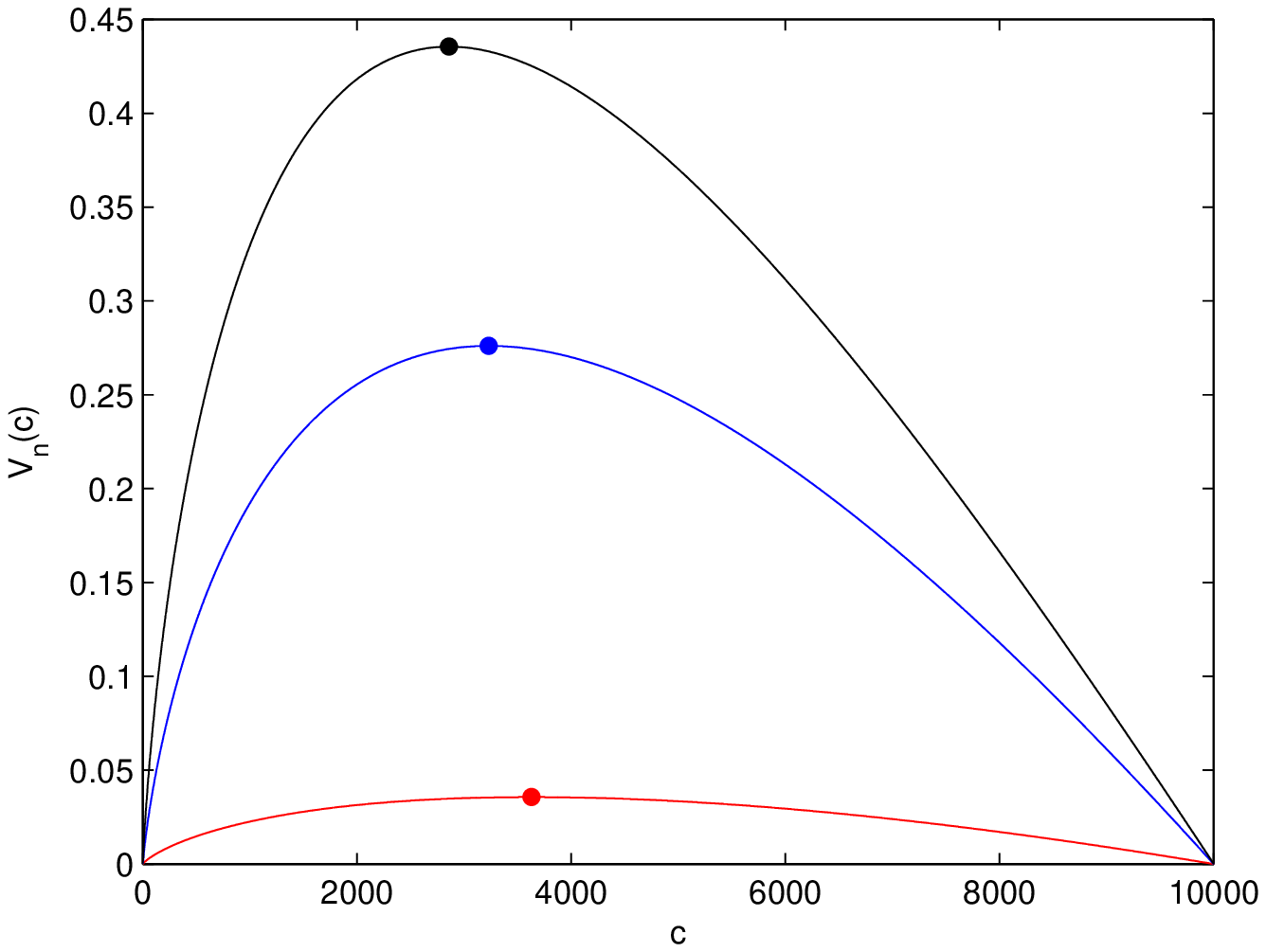} &   \includegraphics[width=55mm]{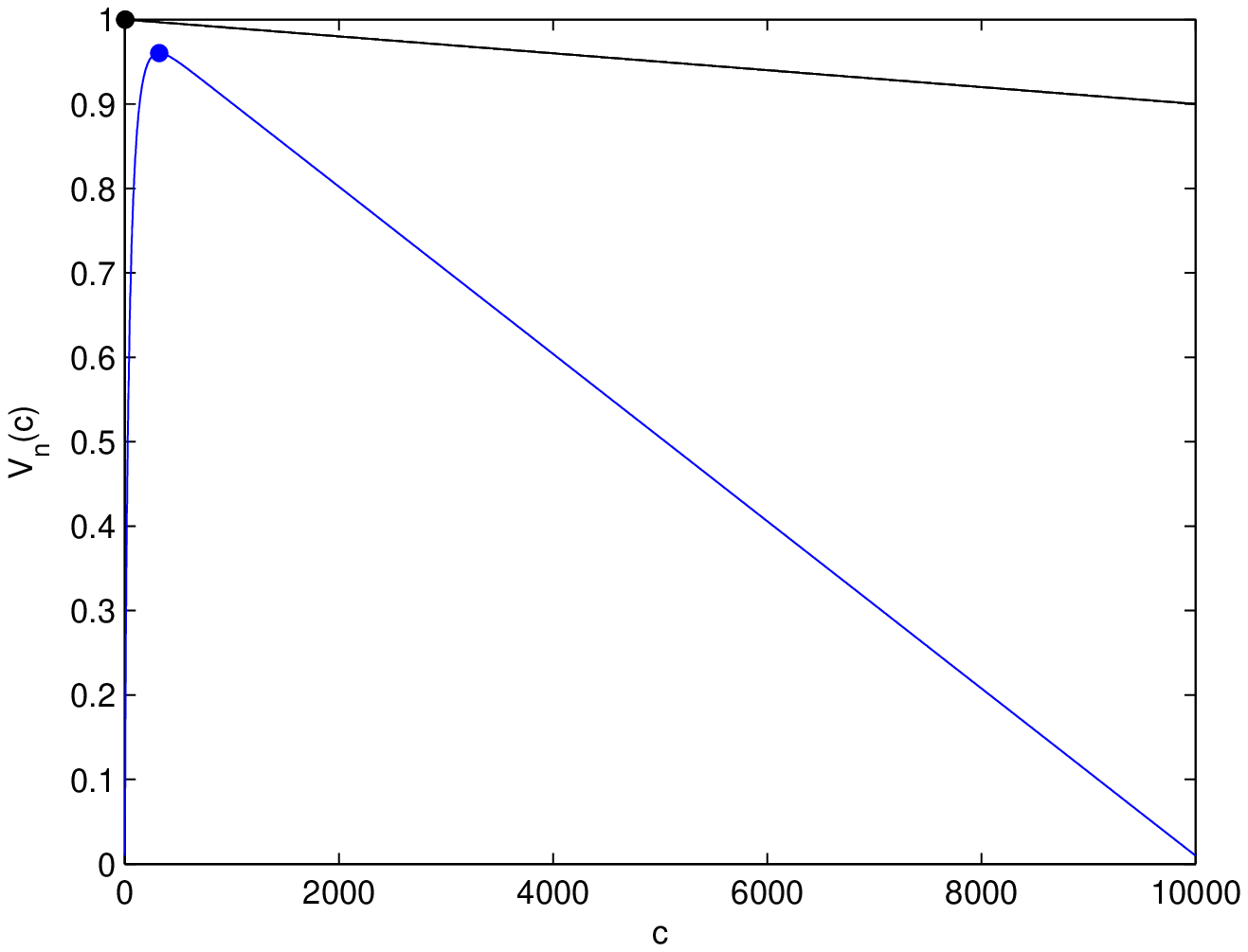} \\
{\footnotesize \centering (d) Bernoulli distribution with $(1-p)n$ close to $\alpha\ge 0$.} & 
{\footnotesize \centering (e) Bernoulli distribution with large values of $(1-p)n$.}\\
\end{tabular}
\caption{Plots of $V_n$ as a function of $c$ for different distributions with $n=10000$. The maxima are highlighted.
(a) In example \ref{ex:exponential}, the maximum is attained at $c=1022$, whereas $\frac{n}{\ln(n)+\gamma} \sim 1021.7$.
(b) Example \ref{ex:pareto}, with $\alpha=1.5$ (black) and $\alpha=1+10^{-10}$ (blue), normalized in
order to make $V_n(c_*)=1.$ The points where the maxima are attained approach to $n/e$ as $\alpha$ tends to 1.
(c) In Example \ref{ex:classic}, the optimal threshold index is $c=3680\sim n/e$.
Plots (d) and (e) are taken from Example \ref{ex:bernoulli}.
(d) The colors correspond to $p=1-2/n$ (black), $p=1-1/n$ (blue) and $p=1-0.1/n$ (red). 
The points where the maxima are attained approach to $n/e$ as $p$ tends to 1.
(e) Plots for $p=0.1$ (black) and $p=0.99$ (blue). 
The points where the maxima are attained tend to $2$ as $p \to 0.$ 
}
\label{fig:graficos}
\end{figure}

\subsection{Continuous i.i.d. random variables}\label{ss:iidcont}
Next we give three examples of i.i.d. continuous random variables that will suggest 
that `the heavier the weight of the tails, the longer the recruiter has to wait'.
Let us recall that for uniform distributions it holds that $c_*(n) \sim \sqrt{n}$ (cf.~\cite{Bea}); here we provide other examples of interest.

\begin{example}[Exponential distribution]\label{ex:exponential}
Let the candidates' values
 be given by independent exponential distributions,  $X_k\sim \exp(1)$.  It is easy to verify that
\begin{align*}
\mu_k=\sum_{j=1}^k\frac{1}{j}, 
\end{align*}
thus,
\begin{align*}
\mu_{k+1}-\mu_k=\frac{1}{k+1}.
\end{align*}
Therefore, formula~\eqref{eq:derivada2} becomes
\begin{align*}
\begin{split}
\fd{V}_n(c) 
& =\sum_{k=c}^{n-1}\frac{1}{k(k+1)}-\frac{\sum_{k=1}^n\frac{1}{k}-1}{n-1}
\\
&=
\frac{1}{c}-\frac{1}{n}+\frac{1}{n-1}-\frac{\sum_{i=1}^{n-1}\frac{1}{i}}{n-1}. \\
\end{split}
\end{align*}
From this last equation it is straightforward to check that $$ c_*(n) \sim \frac{n}{\ln(n)+\gamma} $$
where $$ \gamma = \lim_n \sum_{k=1}^{n} \frac1k - \ln n $$ is the Euler-Mascheroni constant.
Indeed, setting $\fd{V}_n(c)\le 0$, we immediately bound
\[ 
0 > \frac{1}{c} -\frac{\sum_{i=1}^{n-1}\frac{1}{i}}{n-1} ,
\]
so that $$ c > \frac{n-1}{\sum_{i=1}^{n-1}\frac{1}{i}} $$ from which 
$$ c_*(n) > \left \lfloor \frac{n-1}{\sum_{i=1}^{n-1}\frac{1}{i}} \right \rfloor. $$
The bound $$ c_*(n) < \left \lceil \frac{n}{\sum_{i=1}^{n}\frac{1}{i}} \right \rceil $$ follows 
similarly.
\end{example}

\begin{example}[Normal distribution]
Assume the aspirants' qualities 
are given by a distribution $N(0,1)$. Then, the expected value of the maximum among the first $k$ applicants 
satisfies
\[ 
\mu_k^2  = \ln \left(\frac{k^2}{2 \pi}\right) - \ln \ln \left(\frac{k^2}{2\pi} \right) + f(k),
\]
where $f$ is a function such that $\lim_{k \to \infty} f(k) = 4\gamma$ (see \cite{DN}*{Ex. 10.5.3}).
Therefore, if $k$ is large enough,
\begin{equation}
\ln \left(\frac{k^2}{2 \pi}\right) - \ln \ln \left(\frac{k^2}{2\pi} \right) \le \mu_k^2 \le
\ln \left(\frac{k^2}{2 \pi}\right).
\label{eq:max_gaussianas} 
\end{equation}
This last inequality allows us to obtain an upper bound for the optimal threshold index. Recall that, due to Proposition \ref{prop:c_diverge}, 
$c_*(n) \to \infty$ as $n\to \infty$.

Resorting to \eqref{eq:max_gaussianas} we write
\[
\mu_c \fd{V}_n(c) \le \sum_{k=c+1}^{n-1} \frac{\mu_k^2}{k(k-1)} - \frac{\mu_c^2}{c} \le 
\sum_{k=c+1}^{n-1} \frac{\ln \left(\frac{k^2}{2 \pi}\right)}{k(k-1)} - \frac{\ln \left(\frac{c^2}{2 \pi}\right)}{c} + \frac{\ln \ln \left(\frac{c^2}{2\pi} \right)}{c}.
\]
Observe that 
\begin{equation*}
\sum_{k=c+1}^{n-1} \frac{\ln \left(\frac{k^2}{2 \pi}\right)}{k(k-1)} 
\le \int_c^{n-1} \frac{\ln \left(\frac{t^2}{2 \pi}\right)}{(t-1)^2} \, dt,
\end{equation*}
whose right hand side equals to
\begin{equation*}
\frac{\ln \left(\frac{c^2}{2\pi}\right)}{c-1} + 2 \ln \left(\frac{c}{c-1} \right)
- \frac{\ln \left(\frac{(n-1)^2}{2\pi}\right)}{n-2} - 2 \ln \left(\frac{n-1}{n-2} \right).
\end{equation*}
Thus, since $$ \frac{\ln \left(\frac{c^2}{2\pi}\right)}{c}<1 \qquad \text{ and } \qquad
 \ln \left(\frac{c}{c-1} \right) < \frac{1}{c-1}, $$ 
we bound
\begin{align*}
\mu_c & \fd{V}_n(c)  \le \\
& \le 2 \ln \left(\frac{c}{c-1}\right) +\frac{\ln \left(\frac{c^2}{2\pi}\right)}{c(c-1)} + \frac{\ln\ln \left(\frac{c^2}{2\pi}\right)}{c} - 
\frac{\ln \left(\frac{(n-1)^2}{2\pi}\right)}{n-2} - 2 \ln \left(\frac{n-1}{n-2}\right) \\
& < \frac{3 + \ln\ln \left(\frac{c^2}{2\pi}\right)}{c-1} - \frac{\ln \left(\frac{(n-1)^2}{2\pi}\right)}{n-2} .
\end{align*}

Finally, if we substitute $ c $ by
$$ \tilde{c} = \frac{(n-2) \ln \ln n}{\ln\left(\frac{(n-1)^2}{2\pi}\right)}+1 $$ 
in the expression above, we obtain
\[
\fd{V}_n(\tilde c) < \frac{\ln \left(\frac{(n-1)^2}{2\pi}\right)}{\mu_{\tilde c} (n-2)} 
\left(\frac{3 + \ln\ln \left(\frac{\tilde c^2}{2\pi}\right)}{\ln \ln n} - 1 \right) \le 0,
\]
for $n$ large enough. This provides an upper bound for the optimal 
threshold index, namely
\[
c_*(n) \le \frac{(n-2) \ln \ln n}{\ln\left(\frac{(n-1)^2}{2\pi}\right)}+1.
\]
\end{example}

\begin{example}[Pareto distribution]\label{ex:pareto}
Consider $\pare{X_{k}}_{k=1,\ldots,n}$ i.i.d. Pareto distributions with CDF equal to
$F(x ;\alpha)=1-x^{-\alpha}$ for  $x \ge 1$,
where $\alpha >1$. In this case we have~\cite{DN}*{pg. $52$} that
\[ \mu_{k}= \frac{ \Gamma\pare {1-\frac{1}{\alpha}} \Gamma(k+1)}{\Gamma\pare{k+1-\frac{1}{\alpha}}}.  \]

In \cite{Gau} it is proved the estimates 
\[ (k+1)^{\frac{1}{\alpha}} \ge \frac{\Gamma(k+1)}{\Gamma\pare{k+1-\frac{1}{\alpha}}} \ge k^{\frac{1}{\alpha}}  .\]
Recall that $\Gamma(x+1)=x\Gamma(x)$, and so
\[ \mu_{k+1}-\mu_{k}= \frac{\mu_{k+1}}{\alpha (k+1)} \ge \frac{\Gamma\pare{1-\frac{1}{\alpha}}(k+1)^{\frac{1}{\alpha}-1}}{\alpha} .\]
Thus, taking into account \eqref{eq:derivada2}, we obtain
\[
\begin{split}
\fd{V}_n(c) &\ge  \Gamma\pare{1-\frac{1}{\alpha}}
\pare{-\frac{(n+1)^{\frac{1}{\alpha}}-1}{n-1} + \sum_{k=c}^{n-1}  \frac{(k+1)^{\frac{1}{\alpha}-2}}{\alpha} } 
\\ 
&\ge \Gamma\pare{1-\frac{1}{\alpha}}
\pare{-\frac{(n+1)^{\frac{1}{\alpha}}-1}{n-1} + \frac{1}{\alpha} \int_{c+1}^{n+1} x^{\frac{1}{\alpha}-2} dx }.
\end{split} \]

Note that for fixed $\alpha$, when $n$ is large enough, we have the inequality
\[ -\frac{(n+1)^{\frac{1}{\alpha}}-1}{n-1} \ge -(n+1)^{\frac{1}{\alpha}-1} , \] 
therefore we can write
\begin{align*}
\fd{V}_n(c) \ge  \Gamma\pare{1-\frac{1}{\alpha}} \pare{(c+1)^{\frac{1}{\alpha}-1} \frac{1}{\alpha-1} - (n+1)^{\frac{1}{\alpha}-1} \pare{\frac{1}{\alpha-1}+ 1}}.
\end{align*}

Thus, we obtain that if $$ \frac{n+1}{c+1} \ge  {\alpha}^{\frac{\alpha}{\alpha-1}}   $$ 
then $\fd{V}_n(c) \ge 0$. This implies that
\[\frac{n+1}{c_*(n)+1} \le {\alpha}^{\frac{\alpha}{\alpha-1}}\]
whenever $n$ large enough. 
Finally, note that this last term tends to $e$ as $\alpha$ approaches $1$, thus
the bound of Theorem \ref{teo:bound} is asymptotically sharp for the Pareto distribution when $\alpha \to 1.$
\end{example}

\subsection{Discrete random variables}\label{ss:disc}
In this paragraph we study  the behavior of several discrete random variables. 
We also recover the solution of the classical Secretary Problem and provide a family of examples
for which, as certain parameter varies, the threshold index $ c_* $ exhibits both extremal behaviors, 
the linear one limited by the upper bound from Theorem~\ref{teo:bound} and 
the constant one attained by the minimum possible of $ c_*(n) = 2 $ (see Remark~\ref{r:extbeh} below).  

\begin{example}[Permutations]
If the candidates' values $(X_k)_{1\leq k \leq n}$ are distributed uniformly over the permutations of 
the numbers from  $1$ to $n$, these random variables are exchangeable and satisfy 
\begin{align*}
\mu_k={n \choose k}^{-1} \sum_{i=k}^n i{i-1 \choose k-1} .
\end{align*}
Since
\begin{align*}
\sum_{i=k}^n i {i-1\choose k-1} = k{n+1\choose k+1},
\end{align*}
we obtain the simpler expression
\begin{align*}
\mu_k&={n \choose k}^{-1}  k{n+1\choose k+1}
=\frac{k(n+1) }{k+1},
\end{align*}
and then conclude
\begin{align*}
\mu_{k+1}-\mu_k=\frac{n+1}{(k+2)(k+1)}.
\end{align*}
This, together with~\eqref{eq:derivada2}, 
allows us to estimate the discrete derivative of $V_n$,
\begin{align*}
\fd{V}_n(c) 
=\sum_{k=c}^{n-1}\frac{n+1}{k(k+1)(k+2)}-\frac{n-\frac{n+1}{2}}{n-1}
\end{align*}
and hence
\begin{align*}
\fd{V}_n(c)=
\frac{n^2+n-c^2-c}{2 c (c+1) n}-\frac{1}{2} 
= 
\frac{n+1}{2 c (c+1)}-\frac{1}{2n}-\frac{1}{2}
\end{align*}
from which
\begin{align*}
c_*(n)=\left \lfloor\sqrt{ n+1/4}+1/2\right \rfloor
\end{align*}
is easily obtained.

It is worth noting that this kind of behavior is expected, as this situation is a discrete analogue of the uniform distribution.
Even more, it can be deduced from the considerations in \cite{Bea} that $\left \lfloor\sqrt{ n+1/4}+1/2\right \rfloor$ is also the exact threshold index in the uniform distribution case.
\end{example}

\begin{example}[Recovering the classical Secretary Problem] \label{ex:classic}
Consider random variables  $\pare{X_{k}}_{ 1 \le k \le n }$ in such a way that for each $k$, with probability $1/n$ it holds that $X_{k}$ is equal to $1$ and the rest of them are equal to $0$.
This is equivalent to the classical Secretary Problem since, in this case, maximizing the expected value corresponds to maximizing the probability of selecting the best applicant. Since $ \mu_k= \frac{k}{n} ,$
applying equation \eqref{eq:derivada2} we obtain
\[ \fd{V}_n(c)=\frac{1}{n} \pare{\sum_{k=c}^{n-1} {\frac{1}{k}} -1 } ,\]
Therefore, $c_*(n)$ is the least integer $c$ that satisfies $\sum_{k=c}^{n-1} {\frac{1}{k}} \le 1$, as in the classical Secretary Problem.
\end{example}

\begin{example}[Bernoulli variables]\label{ex:bernoulli}
Assume the applicants' qualities
 are given by i.i.d. Bernoulli random variables $X_k \sim B(1,1-p)$, so that $\mu_k=1-p^k.$
Recalling 
formula~\eqref{eq:derivada2}, it follows that
\begin{align*}
\fd{V}_n(c)
=\sum_{k=c}^{n-1}\frac{p^k-p^{k+1}}{k}-\frac{p-p^n}{n-1} 
=(1-p)\sum_{k=c}^{n-1}\frac{p^k}{k}-\frac{p-p^n}{n-1}.
\end{align*}
Since the function $k \mapsto \frac{p^k}{k}$ is decreasing, after a change of variables we obtain
\[
\int_{c/n}^1 \frac{p^{nz}}{z}  dz  \le
\sum_{k=c}^{n-1}\frac{p^k}{k} \le 
\int_{(c-1)/(n-1)}^1 \frac{p^{(n-1)z}}{z}  dz.
\] 

We study the behavior of the optimal threshold index in two different scenarios.
For this purpose, let us consider $p=p(n)$ and define
$$ f(n):=(1-p)n, $$ 
the expected number of candidates with quality $X_k=1$.
The two aforementioned situations are distinguished by the asymptotic behavior of $f(n)$ as $n\to \infty$.

In first place, assume that $\lim_n f(n) = \alpha \ge 0$ and perform calculations for $c=c(n)$. In such case, we have that $\lim_n p^n = e^{-\alpha}$ and
\[
\lim_n \frac{p-p^n}{(n-1)(1-p)} = \begin{cases}
 \frac{1-e^{-\alpha}}{\alpha} &  \text{ if } \alpha > 0, \\
 \quad 1 & \text{ if } \alpha =0.
\end{cases}
\]
Let us call $g(\alpha)$ the function defined by the right hand side above. Then, given $\epsilon>0$, if $n$ is large enough we obtain
\[
\frac{\fd{V}_n(c)}{1-p} \ge  \int_{c/n}^1 \frac{p^{nz}}{z}  dz - g(\alpha) - \epsilon.
\]
As the integral above is non convergent if the lower limit $c/n$ is substituted by $0$, if $c/n \to 0$ we have $\fd{V}_n(c) \ge 0$. This shows that the optimal threshold index is linear in $n$. A lower bound may be obtained if $c/n = \beta > 0$, since due to the Dominated Convergence Theorem,
\[
\lim_n \int_{c/n}^1 \frac{p^{nz}}{z}  dz = \int_\beta ^1 \frac{e^{-\alpha z}}{z} dz .
\]
Therefore,
\[
\fd{V}_n(n\beta) \ge \int_\beta ^1 \frac{e^{-\alpha z}}{z} dz - g(\alpha) - 2\epsilon 
\]
if $n$ is large enough. Taking $\beta$ such that
\[
\int_\beta ^1 \frac{e^{-\alpha z}}{z} dz = g(\alpha),
\]
we obtain a lower bound for $c_*$. For example, if $\alpha = 0$ we recover the sharp bound $\beta = 1/e$, and for $\alpha=1$ we obtain $\beta \simeq 0.323.$

On the other hand, assuming that $\lim_n f(n) = \infty$ it is possible to show that the optimal threshold index is not linear in $n$. Indeed, simple calculations give
\begin{equation} \label{eq:bernoulli}
\fd{V}_n(c) \le \frac{1}{n-c} \int_{(c-1)/(n-1)}^1 \left[ (1-p)(n-c) \frac{p^{(n-1)z}}{z} - p + p^n \right] dz.
\end{equation}
Assume that there exists an $\epsilon > 0$ such that $(c-1)/(n-1) > \epsilon$. Then, the integrand is easily shown to be negative for large $n$, because
\[
(1-p)(n-c) \frac{p^{(n-1)z}}{z} - p + p^n \le p \left[ \frac{(1-p)(n-c)(n-1) p^{\epsilon (n-1)-1}}{c-1} - 1 + p^{n-1} \right] \le 0
\] 
for large $n$, because the term in brackets tends to $-1$ as $n \to \infty$.
This shows that if $c$ is linear in $n$, the derivative at $c$ is negative. Thus, the optimal threshold index is not linear in $n$.
\end{example}

\begin{remark}\label{r:extbeh}
This last example provides a family of situations for which there is no lower bound for the optimal threshold index. Indeed, let us fix $n$ and $c > 2$ and consider the limit $p\to 0$. The integrand in the right hand side of\eqref{eq:bernoulli} is pointwise bounded above by
\[
p \left( \frac{(1-p)(n-c)(n-1) p^{c-2}}{c-1} - 1 + p^{n-1}  \right),
\]
and so it is negative for every $z \in [(c-1)/(n-1), 1]$ if $p$ is small enough. This proves that
$\fd{V}_n(c) \le 0$ for every $n,c > 2$ if $p$ is small enough, and thus $c_* =2$.
\end{remark}

\bibliography{bib_secretary}
\bibliographystyle{plain}
\end{document}